\newtheorem{theorem}{Theorem}
\newtheorem{lemma}{Lemma}
\newtheorem{fact}{Fact}[theorem]
\newtheorem{definition}{Definition}
\newtheorem{claim}{Claim}
\newtheorem{proposition}{Proposition}
\tikzstyle{vertex}=[circle, draw, inner sep=0pt, minimum size=6pt]
\title{ANOTHER VIEW OF BIPARTITE RAMSEY NUMBERS}
\author[1]{Yaser Rowshan$^1$}
\author[2]{MOSTAFA GHOLAMI$^1$}
\keywords{Ramsey numbers, Bipartite Ramsey numbers, complete graphs, m-bipartite Ramsey
	number.}
\subjclass[2010]{05D10, 05C55.}
\address{$^1$Department of Mathematics, Institute for Advanced Studies in Basic Sciences (IASBS), Zanjan 66731-45137, Iran}
\email{y.rowshan@iasbs.ac.ir,~~~y.rowshan.math@gmail.com}
\email{gholami.m@iasbs.ac.ir}
\begin{document}
	\maketitle
	
	\begin{abstract}
		For bipartite graphs $G$ and $H$ and a positive integer $m$, the $m$-bipartite Ramsey number $BR_m(G, H)$ of $G$ and $H$ is the smallest integer $n$,
		such that every red-blue coloring of $K_{m,n}$ results in a red $G$ or a blue $H$.
		Zhenming Bi, Gary Chartrand and Ping Zhang in \cite{bi2018another} evaluate this numbers for all positive integers $m$ when $G= K_{2,2}$  and
		$H \in \{K_{2,3}, K_{3,3}\}$, especially in a long and hard argument  they showed that  $BR_5(K_{2,2}, K_{3,3}) = BR_6(K_{2,2}, K_{3,3}) = 12$ and 
		$BR_7(K_{2,2}, K_{3,3}) = BR_8(K_{2,2}, K_{3,3}) = 9$. In this article, by a short and easy argument we determine the exact value of  $BR_m(K_{2,2}, K_{3,3})$ for each $m\geq 1$.
	\end{abstract}
	
	\section{Introduction}
	For given bipartite graphs $G_1,G_2,\ldots,G_t$ the  bipartite Ramsey number $BR(G_1,G_2,\ldots,G_t)$ is defined as the smallest positive integer $b$, such that any $t$-edge-coloring of the complete bipartite graph $K_{b,b}$ contains a monochromatic subgraph isomorphic to $G_i$, colored with the $i$th color for some $i$. One can refer to \cite{rowshan2021proof, gholami2021bipartite, hatala2021new,   rowshan2021size, chartrand2021new} and their references for further studies.
	
	We now consider red-blue colorings of complete bipartite graphs when the numbers of vertices in the two partite sets need not differ by at most $1$.  For bipartite graphs $G$ and $H$ and a positive integer $m$, the $m$-bipartite Ramsey number $BR_m(G, H)$ of $G$ and $H$ is the smallest integer $n$, such that every red-blue coloring of $K_{m,n}$ results in a red $G$ or a blue $H$. Zhenming Bi, Gary Chartrand and Ping Zhang in \cite{bi2018another} evaluate this numbers for all positive integers $m$ when $G= K_{2,2}$  and
	$H = K_{3,3}$, especially in a long and hard argument  they showed that:
	\begin{theorem}[Main results]\label{M.th}
		Suppose that  $m\geq 2 $ be a positive integer. Then:
		\[
		BR_m(K_{2,2},K_{3,3})= \left\lbrace
		\begin{array}{ll}	
			\text{does not exist}, & ~~~if~~~m=2,3,\vspace{.2 cm}\\
			15 & ~~~if~~~m=4,\vspace{.2 cm}\\
			12 & ~~~if~~~ m=5,6,\vspace{.2 cm}\\
			9 & ~~~if~~~ m=7,8.\vspace{.2 cm}\\
		\end{array}
		\right.
		\]	
	\end{theorem}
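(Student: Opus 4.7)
The plan is to reformulate the problem on the side $Y$ of $K_{m,n}$ of size $n$. Writing $R_y\subseteq X$ for the red neighborhood of $y\in Y$ and $B_y=X\setminus R_y$, absence of a red $K_{2,2}$ is equivalent to the local condition $(C_1)$: every pair $P\in\binom{X}{2}$ satisfies $|\{y: P\subseteq R_y\}|\le 1$, and absence of a blue $K_{3,3}$ is equivalent to $(C_2)$: every triple $T\in\binom{X}{3}$ satisfies $|\{y: T\subseteq B_y\}|\le 2$. Summing these over all pairs and triples gives the global double-counting bounds
\[
\sum_{y\in Y}\binom{|R_y|}{2}\le\binom{m}{2},\qquad \sum_{y\in Y}\binom{|B_y|}{3}\le 2\binom{m}{3}.
\]

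The cases $m\in\{2,3\}$ are handled by universal colorings valid for all $n$: all-blue when $m=2$ (blue $K_{3,3}$ is impossible since $|X|<3$), and $R_y=\{x_1\}$ for every $y$ when $m=3$ (red graph is a star, blue graph is $K_{2,n}$). For $m\in\{4,\ldots,8\}$, I group the $y$'s by $|R_y|$ into counts $n_0,\ldots,n_m$, turning the two global bounds into linear inequalities in the $n_k$ with $n=\sum n_k$. Choosing positive weights $\alpha,\beta$ so that every $n_k$-coefficient of $\alpha\sum\binom{|R_y|}{2}+\beta\sum\binom{|B_y|}{3}$ is at least $1$ gives $n\le\alpha\binom{m}{2}+2\beta\binom{m}{3}$. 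For the ``tight'' values $m=4,6$ this LP argument alone produces the claimed bound (for example, $(\alpha,\beta)=(1,1)$ works for $m=4$ and $(\alpha,\beta)=(\tfrac13,\tfrac16)$ works for $m=6$). For $m\in\{5,7,8\}$ the LP exceeds the truth by one or two, so I enumerate all integer-extremal distributions $(n_0,\ldots,n_m)$ and rule each out. For $m=5,\ n=12$ the unique such distribution is $(n_1,n_2)=(2,10)$: $n_2=10$ forces every pair of $X$ to appear exactly once as some $R_y$, and together with two singleton $R_y=\{x_a\},\{x_b\}$ the triple $T=X\setminus\{x_a,x_b\}$ ends up blue-contained by three $y$'s, violating $(C_2)$. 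For $m=7,\ n=9$ the dominant candidates such as $(n_1,n_3)=(2,7)$ and $(n_2,n_3)=(3,6)$ each force a complete or one-line-deficient Fano configuration on $X$, and in each case one can point to an explicit triangle on which $(C_2)$ fails; the analysis for $m=8$ runs in parallel.

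The matching lower bounds come from explicit constructions, in every case verified against $(C_1)$ and $(C_2)$ by direct inspection. Representative examples: for $K_{4,14}$, six $y$'s realizing each pair and eight $y$'s with two per singleton; for $K_{5,11}$, the nine pairs other than $\{x_1,x_2\}$ plus one $y$ each with $R_y=\{x_1\}$ and $R_y=\{x_2\}$; for $K_{6,11}$, a bipartition $X=\{x_1,x_2,x_3\}\cup\{x_4,x_5,x_6\}$ contributing the two triple-neighborhoods together with the nine ``mixed'' pairs; for $K_{7,8}$, the seven lines of the Fano plane on $X$ as the $R_y$'s plus one singleton; for $K_{8,8}$, a Fano plane on $\{x_1,\ldots,x_7\}$ plus an eighth $y$ with $R_y=\{x_8\}$.

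The principal obstacle lies in the enumeration step for $m\in\{5,7,8\}$: several LP-feasible distributions coexist and each forces an essentially unique combinatorial configuration on $X$---typically a Steiner or partial Steiner triple system---for which one must exhibit a specific triple violating $(C_2)$. Concentrating the analysis at these few extremal distributions, rather than attempting to tighten the LP, is where the authors' ``short and easy'' improvement over \cite{bi2018another} presumably lies.
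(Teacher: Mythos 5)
Your reformulation on the $Y$-side is sound: $(C_1)$ and $(C_2)$ correctly characterize the absence of a red $K_{2,2}$ and a blue $K_{3,3}$, the two double-counting inequalities follow, and this is a genuinely different (and for small $m$ cleaner) route than the paper's, which argues from the $X$-side via the maximum red degree (its Lemma~\ref{l1}) together with the Zarankiewicz bound $z((7,9),K_{3,3})\le 40$. Your treatment of $m=2,3$ is fine, the LP weights do give $n\le 14$ for $m=4$ and $n\le 11$ for $m=6$, your uniqueness claim $(n_1,n_2)=(2,10)$ for $m=5,\ n=12$ checks out and the triple $X\setminus\{x_a,x_b\}$ indeed receives three blue-containments, and all five constructions you list satisfy $(C_1)$ and $(C_2)$ (I verified each). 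So everything through $m=6$, and all lower bounds, can be made rigorous.

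The genuine gap is the upper bound for $m=7$, which is the hardest part of the theorem. With pair-budget $\binom{7}{2}=21$ and triple-budget $2\binom{7}{3}=70$, the feasible degree distributions $(n_1,n_2,n_3,n_4)$ for $n=9$ (one checks $n_0=n_5=n_6=n_7=0$ is forced) number about ten, e.g.\ $(0,3,6,0)$, $(0,4,5,0)$, $(0,5,4,0)$, $(1,2,6,0)$, $(1,3,5,0)$, $(2,0,7,0)$, $(0,5,3,1)$, $(0,6,2,1)$, $(1,3,4,1)$, $(0,6,1,2)$ --- not just the two you name. For several of these (e.g.\ $(0,5,4,0)$, which uses only $66$ of the $70$ units of triple-budget) the global counts have slack, so no averaging argument kills them; you must produce a structural contradiction for each, and you have only gestured at such arguments even for your two ``dominant'' cases (the reduction of $(n_2,n_3)=(3,6)$ to a Fano-minus-a-line configuration, for instance, needs a proof that a $6$-block partial Steiner triple system on $7$ points with triangle leave has that form). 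Until each of the roughly ten distributions is explicitly ruled out, the $m=7$ bound --- and with it $m=8$, which you should in any case get for free from the monotonicity $BR_8\le BR_7$ rather than ``in parallel'' --- is not established. The paper short-circuits exactly this case analysis by importing $z((7,9),K_{3,3})\le 40$ to force the red degree sequence on $X$ before doing its structural argument; you will need either a comparably strong external input or the full enumeration.
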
 
	In this article, we come up with a short and easy argument  to prove Theorem \ref{M.th}.
	
	\section{Preparations}  
	In this article, we are only concerned with undirected, simple, and finite graphs. We follow \cite{bondy1976graph} for terminology and notations not defined here. Let $G$ be a graph with vertex set $V(G)$ and edge set $E(G)$. The degree of a vertex $v\in V(G)$ is denoted by $\deg_G(v)$, or simply by $\deg(v)$. The neighborhood $N_G(v)$ of a vertex $v$ is the set of all vertices of $G$ adjacent to $v$ and satisfies $|N_G(v)|=\deg_G(v)$. The minimum and maximum degrees of vertices of $G$ are denoted by $\delta(G)$ and $\Delta(G)$, respectively. 
	As usual, the complete bipartite graph with bipartition $(X,Y)$, where $|X|=m$ and $|Y|=n$, is denoted by $K_{m,n}$. We use $[X,Y]$ to denote the set of edges between a bipartition $(X,Y)$ of $G$. 
	The complement of a graph $G$, denoted by $\overline{G}$.  $H$ is $n$-colorable to $(G_1, G_2,\ldots, G_n)$ if there exists a $n$-edge decomposition of $H$, say $(H_1, H_2,\ldots, H_n)$ where $G_i\nsubseteq H_i$ for each $i=1,2, \ldots,n.$ We use $H\rightarrow (G_1, G_2,\ldots, G_n)$, to show that  $H$  is $n$-colorable to $(G_1, G_2,\ldots, G_n)$.
	\begin{definition}
		The Zarankiewicz number $z((m,n), K_{t,t})$ is defined as the maximum number of edges in any subgraph $G$ of the complete bipartite graph $K_{m,n}$, so that $G$ does not contain $K_{t,t}$ as a subgraph.
	\end{definition}
	By using the bounds in Table $4$ of \cite{collins2016zarankiewicz}, the following proposition  holds.
	
	\begin{proposition}\label{pro1}{(\cite{collins2016zarankiewicz})}The following result on Zarankiewicz number is true: 	
		\begin{itemize}
			\item[$\bullet$] $z((7,9), K_{3,3})\leq 40$.	
		\end{itemize}
	\end{proposition}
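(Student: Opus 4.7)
The plan is to apply a K\H{o}v\'ari--S\'os--Tur\'an-style double-counting argument on a hypothetical $K_{3,3}$-free bipartite graph $G \subseteq K_{7,9}$, and then refine it to close the small gap left at the end.

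Let $G \subseteq K_{7,9}$ have bipartition $(X,Y)$ with $|X|=7$ and $|Y|=9$, and assume $G$ contains no $K_{3,3}$. Write $d_y = \deg_G(y)$ for $y \in Y$ and $e = |E(G)|$. First I would set up the key inequality: since $G$ is $K_{3,3}$-free, every $3$-element subset of $X$ has at most two common neighbors in $Y$. Counting, for each $y\in Y$, the triples of $X$ lying in $N_G(y)$ yields
\[
\sum_{y \in Y} \binom{d_y}{3} \;\leq\; 2\binom{7}{3} \;=\; 70.
\]
By convexity of $\binom{\cdot}{3}$, among integer sequences $(d_y)_{y\in Y}$ with prescribed sum $e$, the left-hand side is minimized when the $d_y$'s differ by at most one. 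For $e=42$ this minimizer is $(5,5,5,5,5,5,4,4,4)$, giving $6\binom{5}{3}+3\binom{4}{3}=72>70$, a contradiction; hence $e\leq 41$.

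The second step is to rule out $e=41$. Here the inequality above is not immediately violated: the minimizing sequence $(5,5,5,5,5,4,4,4,4)$ on $Y$ yields $\sum \binom{d_y}{3}=66\leq 70$. My plan is to combine the $Y$-side count with the dual bound
\[
\sum_{x\in X}\binom{d_x}{3}\;\leq\;2\binom{9}{3}\;=\;168,
\]
and to exploit near-equality: the slack $70-66=4$ forces almost all triples of $X$ to have at most one common neighbor in $Y$, i.e.\ the adjacency pattern must be close to ``design-like.'' I would enumerate the short list of $Y$-degree sequences compatible with $e=41$, and for each run the analogous analysis on the $X$-side (whose average degree $41/7\approx 5.86$ forces a majority of vertices of degree $6$ or $7$), reducing the problem to a small family of near-regular bipartite graphs in which a forbidden $K_{3,3}$ can be located by direct inspection of common neighborhoods.

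The hard part will be exactly this second step: turning a slack of $4$ into a structural contradiction at $e=41$. This is where the tables in \cite{collins2016zarankiewicz} cited in Proposition~\ref{pro1} do the bookkeeping, either by a careful combinatorial case analysis or by a short computer-assisted enumeration, yielding the stated bound $z((7,9),K_{3,3})\leq 40$; my plan is to follow the same strategy, reducing by hand as far as possible and invoking the tabulated result only for the residual configurations.
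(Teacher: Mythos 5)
The paper offers no proof of this proposition at all: it is imported verbatim from Table~4 of \cite{collins2016zarankiewicz}, so there is no in-paper argument to compare yours against. Your first step is correct and self-contained: the count $\sum_{y\in Y}\binom{d_y}{3}\le 2\binom{7}{3}=70$ together with convexity at the balanced sequence $(5,5,5,5,5,5,4,4,4)$ (giving $72>70$) does rule out $42$ edges, and the dual bound $\sum_{x\in X}\binom{d_x}{3}\le 2\binom{9}{3}=168$ is also set up correctly. That already proves $z((7,9),K_{3,3})\le 41$, which is more than the paper itself demonstrates.

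The genuine gap is the passage from $41$ to $40$. As you note, at $e=41$ neither count is violated (the $Y$-side gives $66\le 70$ and the $X$-side gives at most $130\le 168$ for the balanced sequences), and your plan for closing this --- enumerate degree sequences, exploit the slack of $4$, inspect ``near-design'' configurations --- is only a sketch that, by your own admission, falls back on the tabulated result of \cite{collins2016zarankiewicz} for the residual cases. So as written the proposal does not prove the stated bound; it proves $\le 41$ and then cites the same source the paper cites. This matters for the application: in Theorem~\ref{t3} the paper deduces $K_{3,3}\subseteq\overline{G}$ from $|E(\overline{G})|\ge 41$, which requires exactly $z((7,9),K_{3,3})\le 40$; your elementary argument would only license that conclusion from $|E(\overline{G})|\ge 42$, and the counts $63-22=41$ and ``otherwise $|E(\overline{G})|\ge 41$'' in Cases~1 and the degree argument would no longer suffice. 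To make the proposition self-contained you would have to actually carry out the $e=41$ elimination (a finite but nontrivial case analysis of the degree sequences $(5^5,4^4)$, $(6,5^3,4^5)$, etc., against the two counting identities), or else accept the citation as the proof, as the paper does.
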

	Hattingh and Henning in \cite{hattingh1998bipartite} determined the exact value of the bipartite Ramsey number of $BR(K_{2,2}, K_{3,3})$ as follow:
	\begin{theorem}\label{th1}\cite{hattingh1998bipartite} $BR(K_{2,2}, K_{3,3})=9$.
	\end{theorem}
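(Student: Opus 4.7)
My plan is to reduce everything to a pair of inequalities on the red-degree sequence of the $n$-side of $K_{m,n}$. For each vertex $v$ on the $n$-side, let $r_v$ be its red degree, $b_v=m-r_v$ its blue degree, and set $a_i=|\{v:r_v=i\}|$. Avoiding a red $K_{2,2}$ means every pair of $m$-side vertices has at most one common red neighbor, and avoiding a blue $K_{3,3}$ means every triple has at most two common blue neighbors; these translate directly into
$$\sum_{i=0}^{m}\binom{i}{2}a_i\le\binom{m}{2}\quad\text{and}\quad\sum_{i=0}^{m}\binom{m-i}{3}a_i\le 2\binom{m}{3}.$$

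Non-existence of $BR_m$ for $m\in\{2,3\}$ is witnessed by colorings valid for every $n$: the all-blue coloring for $m=2$, and for $m=3$ the coloring that makes all edges at one fixed $m$-vertex red and all other edges blue. For $m\in\{4,5,6\}$, both the upper bounds and the matching lower-bound constructions flow from the two displayed inequalities together with $\sum a_i=n$: for $m=4$ the relaxed LP is integral with optimum $n=14$ attained at $(a_1,a_2)=(8,6)$; for $m=6$ the LP already forces $n\le 11$, with the extremal sequence $(a_2,a_3)=(9,2)$ realized by taking two disjoint triples of $[6]$; for $m=5$ the LP allows $n=12$ via the unique integer sequence $(a_1,a_2)=(2,10)$, and then an additional inspection shows that this sequence cannot be concretely realized without producing a blue $K_{3,3}$ (the ten red-degree-two vertices must be in bijection with the pairs of $[5]$, and any placement of the remaining two red edges creates a triple of $m$-vertices with at least three common blue neighbors).

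For $m\in\{7,8\}$ the counting inequalities are much too weak on their own and must be combined with Proposition~\ref{pro1}. In $K_{7,9}$, Proposition~\ref{pro1} forces $e_R=\sum r_v\ge 63-40=23$; combined with $\sum\binom{r_v}{2}\le 21$ over nine vertices, a short Cauchy--Schwarz bound confines $e_R\in\{23,24\}$ and restricts the red-degree sequence to a handful of candidates. In the tight case $e_R=24$ the sequence $(a_2,a_3)=(3,6)$ is forced, so the red edges form a decomposition of $E(K_7)$ into six edge-disjoint triangles plus three further edges; an elementary degree count shows the three remaining edges themselves form a triangle, and by the uniqueness of the Steiner triple system $\mathrm{STS}(7)$ the configuration is then an $\mathrm{STS}(7)$ with one triple split into three single edges --- at which point the four triples of $[7]$ disjoint from the split triple have three common blue neighbors on the $n$-side, yielding a blue $K_{3,3}$. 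The $e_R=23$ cases are eliminated by similar analysis. This establishes $BR_7=9$, and $BR_8\le BR_7=9$ follows by monotonicity. The matching lower bounds $BR_7,BR_8\ge 9$ are immediate from Theorem~\ref{th1}: the avoiding coloring of $K_{8,8}$ it supplies restricts to one of $K_{7,8}$ and $K_{8,8}$.

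The principal obstacle is the $m=7$ upper bound, where neither the counting inequalities nor Proposition~\ref{pro1} alone is sufficient (their combined constraints on $e_R$ are barely inconsistent). Closing the gap requires the rigidity of $\mathrm{STS}(7)$ to rule out the extremal LP-feasible configuration.
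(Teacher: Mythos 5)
Theorem~\ref{th1} is not proved in this paper at all: it is the result of Hattingh and Henning, quoted from \cite{hattingh1998bipartite} and used as a black box (the paper needs it only to supply the avoiding coloring of $K_{8,8}$ for the lower bounds in Theorem~\ref{t3}). Your proposal does not prove it either. What you have sketched is a proof of the paper's \emph{main} theorem about the numbers $BR_m(K_{2,2},K_{3,3})$, and at the one point where the diagonal number $BR(K_{2,2},K_{3,3})$ actually enters, you write that ``the matching lower bounds are immediate from Theorem~\ref{th1}: the avoiding coloring of $K_{8,8}$ it supplies\ldots''. That is circular: the lower bound $BR(K_{2,2},K_{3,3})\ge 9$ \emph{is} the assertion that such a coloring of $K_{8,8}$ exists, and nothing in your argument constructs one. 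Your counting machinery cannot produce it, since all of your inequalities are one-directional (they only force monochromatic subgraphs, never exclude them); an explicit decomposition of $K_{8,8}$ into a $C_4$-free graph and a $K_{3,3}$-free graph is required and is missing.

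On the upper-bound side your $K_{7,9}$ analysis would, if completed, give $K_{9,9}\to(K_{2,2},K_{3,3})$ and hence $BR\le 9$, and the $e_R=24$ case via the Fano plane is a genuinely nice argument (tightness of $\sum_v\binom{r_v}{2}\le\binom{7}{2}$ forces a decomposition of $E(K_7)$ into six triangles and a residual triangle, i.e.\ an $\mathrm{STS}(7)$ with one split line $L$; any triple in the complement of $L$ then has the three ``pair'' vertices as common blue neighbours). But the claim that ``the $e_R=23$ cases are eliminated by similar analysis'' is not a proof: with $\sum_v\binom{r_v}{2}\le 19<21$ the design rigidity you rely on disappears (two pairs of the $7$-set may be uncovered, and degree sequences containing a $4$ or a $0$/$1$ must be treated), so this is in fact the harder half of the case analysis and is left as a gap. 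In short: wrong target statement, a circular appeal to the very theorem being proved for the lower bound, and an unfinished upper-bound case.
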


	\begin{lemma}\label{l1} Suppose that $G$ be a subgraph of $K_{m,n}$, where $n\geq6$, and $m\geq 4$. If there exists a vertex of $V(G)$ say $w$, so that $\deg_G(w)\geq 6$, then either $K_{2,2} \subseteq G$ or $K_{3,3} \subseteq \overline{G}$.
	\end{lemma}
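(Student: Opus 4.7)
The plan is to argue by contrapositive: assuming $K_{2,2}\not\subseteq G$, I will exhibit a $K_{3,3}$ in $\overline{G}$. The $K_{2,2}$-free hypothesis is precisely the statement that any two vertices on the same side of the bipartition share at most one common neighbor in $G$, and this is the only structural fact I expect to use.

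Let $(X,Y)$ be the bipartition of $K_{m,n}$, with $|X|=m$ and $|Y|=n$. Split into two cases based on which side contains $w$. Suppose first that $w\in X$, and set $S\isdef N_G(w)\subseteq Y$, so $|S|\geq 6$. Because $m\geq 4$, the set $X\setminus\{w\}$ has at least $3$ vertices; choose any three of them, say $u_1,u_2,u_3$. Since $K_{2,2}\not\subseteq G$, each pair $\{w,u_i\}$ has at most one common neighbor in $Y$, so $|N_G(u_i)\cap S|\leq 1$ for each $i$. Therefore the set $T\isdef S\setminus\bigl(N_G(u_1)\cup N_G(u_2)\cup N_G(u_3)\bigr)$ has size at least $6-3=3$. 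Picking any three vertices $s_1,s_2,s_3\in T$, the vertex sets $\{u_1,u_2,u_3\}\subseteq X$ and $\{s_1,s_2,s_3\}\subseteq Y$ span no edges of $G$, hence form a $K_{3,3}$ in $\overline{G}$.

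Now suppose $w\in Y$. Then $\deg_G(w)\leq m$, so the hypothesis $\deg_G(w)\geq 6$ forces $m\geq 6$ automatically. Set $S\isdef N_G(w)\subseteq X$, with $|S|\geq 6$. Since $n\geq 6$, the set $Y\setminus\{w\}$ has at least $5$ vertices; pick any three, say $u_1,u_2,u_3$. As before, $K_{2,2}\not\subseteq G$ gives $|N_G(u_i)\cap S|\leq 1$, so at least $3$ vertices $s_1,s_2,s_3\in S$ avoid $N_G(u_1)\cup N_G(u_2)\cup N_G(u_3)$, and $\{s_1,s_2,s_3\}\subseteq X$ together with $\{u_1,u_2,u_3\}\subseteq Y$ yields the desired $K_{3,3}\subseteq\overline{G}$.

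There is no real obstacle here: the only place that could fail is having too few vertices on $w$'s side to pick three ``other'' ones, but the numerical hypotheses $m\geq 4$ (when $w\in X$) and the implicit $m\geq 6$ (when $w\in Y$) are tailored exactly to make the counting argument above go through with room to spare.
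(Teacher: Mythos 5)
Your proof is correct and follows essentially the same route as the paper: fix three vertices on $w$'s side other than $w$, use $K_{2,2}$-freeness to bound each of their neighborhoods inside $N_G(w)$ by one vertex, and conclude that at least $6-3=3$ vertices of $N_G(w)$ are joined to none of them, yielding $K_{3,3}\subseteq\overline{G}$. The only difference is that you treat the case $w\in Y$ explicitly (correctly noting it forces $m\geq 6$), where the paper disposes of it with a ``w.l.o.g.''
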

	\begin{proof}
		W.l.g suppose that $w\in X$, and $N_G(w)=Y'$, where  $|Y'|\geq 6$. Assume that  $K_{2,2} \nsubseteq G$, therefore 	 $|N_G(w')\cap Y'|\leq 1$ fore each $w'\in X\setminus\{w\}$. Now, as $|X|\geq 4$ and $|Y'|=6$, then one can say that $K_{3,3} \subseteq \overline{G}[X\setminus\{w\}, Y']$, which means that the proof is complete.
	\end{proof}
	\section{\bf Proof of the main results}
	To prove our main results, namely Theorem \ref{M.th}, we begin with  the following theorem.	 
	\begin{theorem}\label{t1}
		$BR_4(K_{2,2}, K_{3,3})=15$.
	\end{theorem}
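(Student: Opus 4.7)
The plan is to establish both bounds $BR_4(K_{2,2},K_{3,3})\le 15$ and $BR_4(K_{2,2},K_{3,3})>14$ separately, the first by a tight double-counting argument and the second by an explicit construction on $K_{4,14}$.

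For the upper bound, consider an arbitrary red/blue coloring of $K_{4,15}$ with parts $X$ ($|X|=4$) and $Y$ ($|Y|=15$), let $R$ denote the red subgraph, and assume $R$ contains no $K_{2,2}$. For each $y\in Y$ write $d_y=\deg_R(y)$ and $n_i=|\{y\in Y:d_y=i\}|$. Absence of a red $K_{2,2}$ means any two vertices of $X$ have at most one common red neighbor in $Y$; counting (unordered pair of $X$-vertices, $y$ red-adjacent to both) gives
\[
\sum_{y\in Y}\binom{d_y}{2}=n_2+3n_3+6n_4\le \binom{4}{2}=6,
\]
hence $n_2+n_3+n_4\le 6$ and therefore $n_0+n_1\ge 9$.

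To force a blue $K_{3,3}$, I would then count pairs $(y,T)$ with $T\subset X$, $|T|=3$, and $T$ contained in the blue neighborhood of $y$. Since each blue neighborhood has size $4-d_y$,
\[
\sum_{y\in Y}\binom{4-d_y}{3}=4n_0+n_1\ge n_0+n_1\ge 9.
\]
There are only $\binom{4}{3}=4$ triples in $X$, so if every triple lay in the blue neighborhood of at most two vertices of $Y$, the total would be at most $2\cdot 4=8$, a contradiction. Hence some triple is contained in at least three blue neighborhoods, yielding a blue $K_{3,3}$.

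For the lower bound, I would exhibit a coloring of $K_{4,14}$ that avoids both monochromatic copies. With $X=\{x_1,x_2,x_3,x_4\}$, split $Y$ (size $14$) into six \emph{pair-vertices}, one for each 2-subset $\{x_i,x_j\}\subset X$, whose red neighborhood is exactly $\{x_i,x_j\}$, and eight \emph{singleton-vertices}, two copies for each $i\in\{1,2,3,4\}$, each with red neighborhood $\{x_i\}$. Every pair $\{x_i,x_j\}$ has exactly one common red neighbor (its pair-vertex), so the red graph is $K_{2,2}$-free. For each triple $T=X\setminus\{x_i\}$, the vertices of $Y$ whose blue neighborhood contains $T$ are precisely the two singleton-vertices attached to $x_i$; no triple is covered three times, so the blue graph is $K_{3,3}$-free.

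The main obstacle is that the double count is extremely tight: the minimum of $4n_0+n_1$ under the red $K_{2,2}$-free constraint is exactly $9$ when $|Y|=15$ and exactly $8$ when $|Y|=14$, sitting on opposite sides of the pigeonhole threshold $2\binom{4}{3}=8$. The construction therefore realizes the extremal case of the counting argument, and the delicate point is to verify it does so without introducing any accidental monochromatic copy—which is handled by the explicit description above.
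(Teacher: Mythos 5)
Your argument is correct, and the upper bound is reached by a genuinely different route from the paper's. The paper proves $K_{4,15}\rightarrow(K_{2,2},K_{3,3})$ by a case analysis on $\Delta(G_X)$, the maximum red degree on the $4$-side: $\Delta\ge 6$ is dispatched by Lemma \ref{l1}, $\Delta\le 4$ is asserted directly, and $\Delta=5$ takes a page of ad hoc neighborhood chasing. You instead count from the $15$-side: $K_{2,2}$-freeness forces $\sum_{y}\binom{d_y}{2}\le\binom{4}{2}=6$, hence at least $9$ vertices of $Y$ have red degree at most $1$, hence $\sum_{y}\binom{4-d_y}{3}\ge 9>2\binom{4}{3}$, and pigeonhole over the four triples of $X$ produces the blue $K_{3,3}$. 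This is shorter, checkable at a glance, and makes transparent exactly why $15$ is the threshold (with $|Y|=14$ the same count gives only $8$, which just fails). Your lower-bound coloring of $K_{4,14}$ is in substance identical to the paper's Figure \ref{fi0}: there, each pair $\{x_i,x_j\}$ has exactly one common red neighbor and each $x_i$ has exactly two private neighbors of red degree $1$, which is precisely your six pair-vertices plus eight singleton-vertices; you merely describe the coloring combinatorially and verify it against your counting bound rather than by inspection of a picture. Both halves of your proposal are complete as written.
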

	\begin{proof}
		\medskip
		 By Figure \ref{fi0}, one can  check that $K_{2,2} \nsubseteq G$.  Also, by  Figure \ref{fi0}, it can be said that for each $X'=\{x,x',x''\}\subseteq \{x_1,\ldots,x_2\}$ and $Y'=\{y,y',y''\}\subseteq \{y_1,\ldots,y_{14}\}$,  there is at least one edge of $E([X',Y'])$ say $e$, so that $e\in E(G)$. Which means that $\overline{G}$ is $K_{3,3}$-free. So,	$K_{4,14} \rightarrow (K_{2,2},K_{3,3})$.
		
		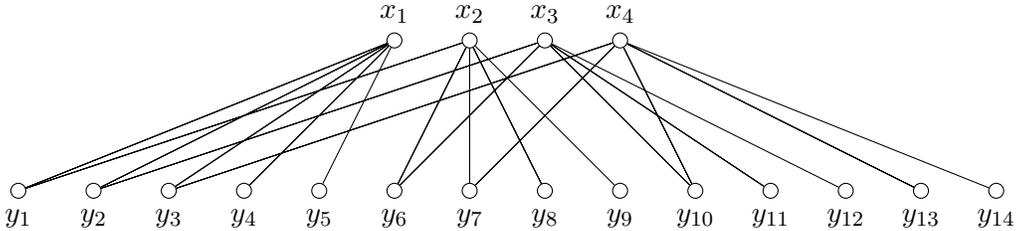
\begin{figure}[ht]
			\begin{tabular}{ccc}
				\begin{tikzpicture}
					\node [draw, circle, fill=white, inner sep=2pt, label=below:$y_1$] (y1) at (0,0) {};
					\node [draw, circle, fill=white, inner sep=2pt, label=below:$y_2$] (y2) at (1,0) {};
					\node [draw, circle, fill=white, inner sep=2pt, label=below:$y_3$] (y3) at (2,0) {};
					\node [draw, circle, fill=white, inner sep=2pt, label=below:$y_4$] (y4) at (3,0) {};
					\node [draw, circle, fill=white, inner sep=2pt, label=below:$y_5$] (y5) at (4,0) {};
					\node [draw, circle, fill=white, inner sep=2pt, label=below:$y_6$] (y6) at (5,0) {};
					\node [draw, circle, fill=white, inner sep=2pt, label=below:$y_7$] (y7) at (6,0) {};
					\node [draw, circle, fill=white, inner sep=2pt, label=below:$y_8$] (y8) at (7,0) {};
					\node [draw, circle, fill=white, inner sep=2pt, label=below:$y_9$] (y9) at (8,0) {};
					\node [draw, circle, fill=white, inner sep=2pt, label=below:$y_{10}$] (y10) at (9,0) {};
					\node [draw, circle, fill=white, inner sep=2pt, label=below:$y_{11}$] (y11) at (10,0) {};
					\node [draw, circle, fill=white, inner sep=2pt, label=below:$y_{12}$] (y12) at (11,0) {};
					\node [draw, circle, fill=white, inner sep=2pt, label=below:$y_{13}$] (y13) at (12,0) {};
					\node [draw, circle, fill=white, inner sep=2pt, label=below:$y_{14}$] (y14) at (13,0) {};
					\
					\node [draw, circle, fill=white, inner sep=2pt, label=above:$x_1$] (x1) at (5,2) {};
					\node [draw, circle, fill=white, inner sep=2pt, label=above:$x_2$] (x2) at (6,2) {};
					\node [draw, circle, fill=white, inner sep=2pt, label=above:$x_3$] (x3) at (7,2) {};
					\node [draw, circle, fill=white, inner sep=2pt, label=above:$x_4$] (x4) at (8,2) {};
					
					\draw (x1)--(y1)--(x1)--(y2)--(x1)--(y3)--(x1)--(y4)--(x1)--(y5);
					\draw (x2)--(y1)--(x2)--(y6)--(x2)--(y7)--(x2)--(y8)--(x2)--(y9);
					\draw (x3)--(y2)--(x3)--(y6)--(x3)--(y10)--(x3)--(y11)--(x3)--(y12);
					\draw (x4)--(y3)--(x4)--(y7)--(x4)--(y10)--(x4)--(y13)--(x4)--(y14);
				\end{tikzpicture}

			\end{tabular}\\
			\caption{Edge disjoint subgraphs $G$ and $\overline{G}$ of $K_{4,14}$.}
			\label{fi0}
		\end{figure}
		Let $(X=\{x_1,x_2,x_{3},x_4\},Y=\{y_1,y_2,\ldots ,y_{15}\})$ be the partition  sets of  $K_{4,15}$ and suppose that $G\subseteq K_{4,15}$, such that $K_{2,2} \nsubseteq G$. Consider $\Delta=\Delta (G_X)$( the maximum degree of  vertices in the part $X$ in $G$). Since $K_{2,2} \nsubseteq G$, if $\Delta\geq 6$ then the proof is complete by Lemma \ref{l1}. Also, if $\Delta\leq 4$, then  $K_{3,3}\subseteq \overline{G}$. Hence assume that $\Delta=5$ and let $\Delta=|Y'= N_G(x)|$. Since $K_{2,2} \nsubseteq G$, thus $|N_G(x')\cap Y'|\leq 1$ for each $x'\neq x$. If either there exists a vertex of $X\setminus \{x\}$ say $x'$, so that $|N_G(x')\cap Y_1|=0$ or  there exist two vertices of $X\setminus \{x\}$ say $x', x''$, such that $N_G(x')\cap Y'=N_G(x'')\cap Y'$, then it can be said that $K_{3,3}\subseteq \overline{G}[X\setminus \{x\},Y']$. Therefore, we may assume that $|N_G(x')\cap Y'|=1$ and  $N_G(x')\cap Y'\neq N_G(x'')\cap Y'$ for each $ x',x''\in X\setminus \{x\}$. W.l.g let $x=x_1$ and $Y'=\{y_1,\ldots,y_5\}$. Now, for $i=2,3$,  consider $|N_G(x_i)\cap( Y\setminus Y_1)|$. As $\Delta=5$, then either  $|N_G(x_i)\cap (Y\setminus Y_1)|\leq 3$ for one $i=2,3$, or $|N_G(x_i)\cap(Y\setminus Y_1)|=4$ and $|N_G(x_2)\cap N_G(x_3)\cap (Y\setminus Y_1)|=1$. Therefore, as $|Y|=15$, it is easy to say that $|\cup_{i=1}^{i=3}N(x_i)\cap Y|\leq 12$, that is $K_{3,3}\subseteq\overline{G}[\{x_1,x_{2},x_{3}\}, Y\setminus \cup_{i=1}^{i=3}N(x_i)]$, which means that the proof is complete.
	\end{proof}
	\begin{theorem}\label{t2} Suppose that $m\in \{5,6\}$, then  $BR_m(K_{2,2}, K_{3,3})=12$.
	\end{theorem}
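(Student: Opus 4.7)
My plan is to prove both inequalities $BR_m(K_{2,2}, K_{3,3}) \leq 12$ and $BR_m(K_{2,2}, K_{3,3}) \geq 12$ for $m \in \{5, 6\}$. By the trivial monotonicity $BR_{m+1} \leq BR_m$ (any coloring of $K_{m+1,n}$ restricts to one of $K_{m,n}$), it is enough to prove the upper bound for $m = 5$ and the lower bound for $m = 6$. For the latter I would exhibit an explicit edge-decomposition of $K_{6, 11}$ into subgraphs $G$ and $\overline{G}$ with $K_{2,2} \nsubseteq G$ and $K_{3,3} \nsubseteq \overline{G}$, presented via a figure in the style of Figure~\ref{fi0}; one natural candidate arranges $6$ sets on $11$ points with pairwise intersection $\leq 1$ so that no three of their complements contain a common $3$-set.

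For the upper bound $K_{5, 12} \to (K_{2,2}, K_{3,3})$, let $G \subseteq K_{5, 12}$ be $K_{2,2}$-free with parts $X$ (size $5$) and $Y$ (size $12$), and set $\Delta := \Delta(G_X)$. The argument splits by cases on $\Delta$: if $\Delta \geq 6$, Lemma~\ref{l1} immediately gives $K_{3,3} \subseteq \overline{G}$; if $\Delta \leq 3$, any three vertices of $X$ have at most $9$ neighbors in total, leaving $\geq 3$ common non-neighbors in $Y$. For $\Delta = 4$ one has $|E(G)| \leq 20$, so $|E(\overline{G})| \geq 40$; on the other hand a Kővári--Sós--Turán-style count shows any $K_{3,3}$-free subgraph of $K_{5, 12}$ must satisfy $\sum_y \binom{d(y)}{3} \leq 2\binom{5}{3} = 20$, which, maximized over $d(y) \leq 5$ with $12$ vertices of $Y$, gives at most $38$ edges; since $40 > 38$ we conclude $K_{3,3} \subseteq \overline{G}$.

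The main case is $\Delta = 5$. Choose $x_1$ with $|N(x_1)| = 5$, and set $Y_1 := N(x_1)$, $Y_2 := Y \setminus Y_1$, so $|Y_2| = 7$. The $K_{2,2}$-free assumption forces $|N(x_i) \cap Y_1| \leq 1$ for each $i \geq 2$. If some such intersection is empty, or two of them coincide, then a suitable triple of $X$-vertices already has $\geq 3$ common non-neighbors in $Y_1$ alone. Otherwise the four singletons $N(x_i) \cap Y_1$ ($i = 2, \ldots, 5$) are distinct, so any triple from $\{x_2, \ldots, x_5\}$ has exactly $2$ common non-neighbors in $Y_1$ and we need $\geq 1$ more in $Y_2$. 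The key point is that $G[\{x_2, \ldots, x_5\}, Y_2]$ is $K_{2,2}$-free on parts of sizes $4$ and $7$ and hence has at most $13$ edges (from $\sum_{y \in Y_2} \binom{d(y)}{2} \leq \binom{4}{2} = 6$ together with Cauchy--Schwarz). But if every triple from $\{x_2, \ldots, x_5\}$ covered $Y_2$ entirely, each $y \in Y_2$ would need $\geq 2$ neighbors among $x_2, \ldots, x_5$, forcing $\geq 14$ edges---contradicting the bound of $13$. Hence some triple leaves a common non-neighbor in $Y_2$, which together with the two in $Y_1$ gives the desired $K_{3,3}$ in $\overline{G}$.

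The main obstacle I anticipate is the $\Delta = 5$ case, where the crux is the tight interplay between the $K_{2,2}$-free Zarankiewicz bound on the $(4,7)$-subgraph and the pigeonhole argument on the four triples of $\{x_2, \ldots, x_5\}$; the slack $14 - 13 = 1$ is small, and the whole reduction to this counting would collapse if any of the preliminary subcases (empty intersection, coincident singletons) were overlooked.
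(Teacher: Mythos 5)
Your upper-bound argument is correct but takes a genuinely different route from the paper's. Where the paper handles $\Delta=4$ via a structural case analysis (Claims~\ref{c1} and~\ref{c2}, pinning down all neighborhoods) and $\Delta=5$ via Fact~\ref{f1} plus further case-chasing, you replace both with short double-counting arguments. For $\Delta\le 4$: $|E(\overline{G})|\ge 40$, while a $K_{3,3}$-free subgraph of $K_{5,12}$ satisfies $\sum_{y}\binom{d(y)}{3}\le 2\binom{5}{3}=20$ and hence has at most $38$ edges --- I checked the optimization ($10$ vertices of degree $3$ and $2$ of degree $4$ is extremal) and the bound is right. For $\Delta=5$: after disposing of the degenerate subcases (empty or coincident traces on $Y_1=N(x_1)$), the $K_{2,2}$-free bound $\sum_{y\in Y_2}\binom{d(y)}{2}\le\binom{4}{2}=6$ caps $G[\{x_2,\dots,x_5\},Y_2]$ at $13$ edges, so some $y\in Y_2$ has at most one neighbour in $\{x_2,\dots,x_5\}$, and the triple avoiding it together with its two common non-neighbours in $Y_1$ yields $K_{3,3}\subseteq\overline{G}$. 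This is tight ($14$ vs.\ $13$) but valid, and it is appreciably cleaner than the paper's argument; what the paper's longer analysis buys is only that it never needs the Zarankiewicz-style counting machinery.

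The one genuine gap is the lower bound. You state that you ``would exhibit'' a decomposition of $K_{6,11}$ and correctly identify the required properties ($6$ four-element sets on $11$ points with pairwise intersections of size at most $1$ such that every three of them cover at least $9$ points), but you never produce one, and existence of such a configuration is exactly the content of that half of the theorem --- it cannot be waved at. The paper supplies it explicitly (Figure~\ref{fi2}: $x_1,x_2,x_3$ share $y_1$, $x_4,x_5,x_6$ share $y_{11}$, and the remaining $9$ points form a $3\times 3$ grid transversal pattern); you would need to write down such a system and verify both properties to complete the proof. Your reduction of the lower bound to $m=6$ and of the upper bound to $m=5$ via monotonicity of $BR_m$ is fine and matches the paper.
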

	\begin{proof} If we prove the theorem for $m=5$, then for $m=6$ the proof is trivial. Hence, let $m=5$ and assume that $(X=\{x_1,x_2,x_{3},x_4,x_5\},Y=\{y_1,y_2,\ldots ,y_{12}\})$ be the partition  sets of  $K_{5,12}$ and $G\subseteq K_{5,12}$, where $K_{2,2} \nsubseteq G$. Consider $\Delta (G_X)=\Delta$. Since $K_{2,2} \nsubseteq G$, if $\Delta\geq 6$ then the proof is complete by Lemma \ref{l1}. For $\Delta\leq 3$, it is clear that $K_{3,3} \subseteq \overline{G}$. Hence $\Delta\in \{4,5\}$. 
		
		First assume that $\Delta=4$. W.l.g assume that  $Y_1=\{y_1,\ldots,y_4\}=N_G(x_1)$. Since $K_{2,2} \nsubseteq G$, we have $|N_G(x_i)\cap Y_1 |\leq 1$, for each $x_i\in X\setminus\{x_1\}$. Now we have the following claims:
		\begin{claim}\label{c1}
			For each $x\in X$, we have $|N_G(x)|=4=\Delta$.
		\end{claim}	
		\begin{proof}[Proof of Claim \ref{c1}]
			By contrary assume that $|N_G(x)|\leq 3$ for at least one member of $ X\setminus\{x_1\}$ say $x'$. Let  $N_G(x')=Y_2$. As $|X|=5$, there are at least two vertices of $X\setminus\{x'\}$ say $x_i,x_j$, such that $|N_G(x_i)\cap Y_2|= 1$, otherwise  $K_{3,3}\subseteq\overline{G}[X\setminus \{x_2\}, Y_2]$. Therefore as $|Y|=12$ and $\Delta =4$ it can be said that  $K_{3,3}\subseteq\overline{G}[\{x',x_i,x_j\}, Y\setminus Y_2]$.	 
		\end{proof}
		
		\begin{claim}\label{c2}
			For each $x\in X\setminus\{x_1\}$, we have $|N_G(x_1)\cap N_G(x) |=1$.
		\end{claim}	
		\begin{proof}[Proof of Claim \ref{c2}]
			By contradiction, let $|N_G(x_1)\cap N_G(x) |=0$ for at least one member of $X\setminus\{x_1\}$ say $x$. W.l.g let $x=x_2$ and by Claim \ref{c1} let  $N_G(x_2)\cap Y= \{y_5,y_6,y_7,y_8\}$. For $i=1,2$, as $|Y_i|=4$, if either $|N_G(x_j)\cap Y_i|= 0$ for at least one $i\in \{1,2\}$ and one $j\in \{3,4,5\}$ or there exist $j,j'\in \{3,5,5\}$ such that $|N_G(x_j)\cap N_G(x_{j'})\cap Y_i |=1$ for one $i\in\{1,2\}$, then $K_{3,3}\subseteq\overline{G}[X\setminus \{x_i\}, Y_i]$. So, let $|N_G(x_j)\cap Y_i |= 1$ and $N_G(x_j)\cap Y_i\neq N_G(x_{j'})\cap Y_i$ for each $i\in\{1,2\}$ and each $j,j'\in \{3,4,5\}$. W.l.g let  $x_3y_1, x_3y_5, x_4y_2, x_4y_6, x_5y_3, x_5y_7\in E(G)$. Now, since $|Y|=12$, by Claim \ref{c1}, we have $|N_G(x_j)\cap Y_3 |=2$ for each $x\in \{x_3,x_4,x_5\}$, in which $Y_3=\{y_9,y_{10}, y_{11}, y_{12}\}$, Therefore one can say that there exists at least one vertex of $Y_3$ say $y$, so that $|N_G(y)\cap \{x_3,x_4,x_5\}|\leq 1$. W.l.g let $y=y_9$ and assume that $x_3y_9, x_4y_9\in E(\overline{G})$. Hence one can say that $K_{3,3}\subseteq\overline{G}[\{x_2,x_3,x_4\}, \{y_3,y_4,y_9\}]$.
		\end{proof}
		So by Claim \ref{c1}, w.l.g let   $Y_1=\{y_1,\ldots,y_4\}=N_G(x_1)$, and  $Y_2=\{y_1,y_5,y_6,y_7\}=N_G(x_2)$ and by  Claim \ref{c2}, $|N_G(x_i)\cap N_G(x) |=1$ for each $i=1,2$ and each $x\neq x_i$. Now, as $\Delta=4$ and $|Y|=12$,  if there exists a vertex of $\{x_3,x_4,x_5\}$ say $x$, so that $xy_1\in E(\overline{G})$, then it can be checked that  $K_{3,3}\subseteq\overline{G}[\{x_1,x_2,x\}, Y\setminus (Y_1\cup Y_2)]$. Hence assume that $x_iy_1\in E(G)$ for each $i=2,3,4,5$, which means that $K_{3,3}\subseteq\overline{G}[\{x_2,x_3,x_4\}, Y_1\setminus \{y_1\}]$. Hence for the case that $\Delta=4$ the theorem holds.
		
		Now assume that $\Delta=5$. Let $X'=\{x\in X, ~\deg_G(x)=5\}$. Now we have the following fact:
		\begin{fact}\label{f1}
			If	$|X'|= 1$, then the proof is complete.
		\end{fact}
		{\bf Proof of the fact:} We may suppose that  $ X'=\{x_1\}$. W.l.g let $Y_1=\{y_1,\ldots,y_5\}=N_G(x_1)$. Since $K_{2,2} \nsubseteq G$, we have $|N_G(x_i)\cap Y_1 |\leq 1$, for each $x_i\in X\setminus\{x_1\}$.  As $|Y_1|=5$, one can assume that $|N_G(x_i)\cap Y_1 |= 1$ and $N_G(x_i)\cap Y_1\neq N_G(x_j)\cap Y_1$ for each $i,j\in \{2,3,4,5\}$, otherwise $K_{3,3}\subseteq\overline{G}[ X\setminus\{x_1\}, Y_1]$. Therefore, 
		w.l.g suppose that  $x_2y_1, x_3y_2, x_4y_3, x_5y_4\in E(G)$. Now, one can say that $|N_G(x_i)\cap (Y\setminus Y_1 )|=3$ for at least three vertices of $X\setminus \{x_1\}$. Otherwise,  if there exist at least two vertices of $X\setminus \{x_1\}$ say $x_2,x_3$ so that  $|N_G(x_i)\cap (Y\setminus Y_1 )|\leq 2$, since $|Y|=12$ it can be said that $K_{3,3}\subseteq\overline{G}[\{x_1,x_2,x_3\}, Y\setminus Y_1]$. So, assume that $|N_G(x_i)\cap (Y\setminus Y_1 )|=3$ for each $i\in \{2,3,4\}$ and let $Y_i= N_G(x_i)$.  Now, w.l.g we may suppose that $Y_2=\{y_1, y_6,y_7,y_8\}=N_G(x_2)$.  As $K_{2,2} \nsubseteq G$, we have $|N_G(x_i)\cap (\{y_6,y_7,y_8\})|\leq 1$ for each $i\in \{3,4,5\}$.
		With symmetry, for each $i\in \{6,7,8\}$, we have $|N_G(y_i)\cap \{x_3,x_4,x_5\} |\geq 1$.  Otherwise,  if there exists at least one vertex of $Y_2\setminus \{y_1\}$ say $y$ so that  $|N_G(y)\cap (\{x_3,x_4,x_5\} )|=0$, then   $K_{3,3}\subseteq\overline{G}[\{x_3,x_4,x_5\}, \{y_1,y_5,y\}]$.  Hence w.l.g let $x_3y_6, x_4y_7, x_5y_8\in E(G)$ and suppose that $Y_3=\{y_1, y_6,y_9,y_{10}\}=N_G(x_3)$.  As $K_{2,2} \nsubseteq G$, we have $N_G(y_9)\cap (\{x_4,x_5\}) \neq N_G(y_{10})\cap (\{x_4,x_5\}) $, and $|N_G(y_9)\cap \{x_4,x_5\} |=|N_G(y_{10})\cap \{x_4,x_5\} |=  1$. W.l.g let $x_4y_9, x_5y_{10}\in E(G)$. Since $|N_G(x_4)\cap (Y\setminus Y_1 )|=3$, we have $|N_G(x_4)\cap \{y_{11}, y_{12}\}|=1$, w.l.g assume that $x_4y_{11} \in E(\overline{G})$. Therefore,  $K_{3,3}\subseteq\overline{G}[\{x_2, x_3, x_4\}, \{y_4, y_5, y_{12}\}]$, which means that the proof of the fact is complete. 
		
		So, by Fact \ref{f1}, assume that  $x_1,x_2\in X'$, and $Y_1=\{y_1,\ldots,y_5\}=N_G(x_1)$. Since $K_{2,2} \nsubseteq G$, we have $|N_G(x_i)\cap Y_1 |\leq 1$, for each $x_i\in X\setminus\{x_1\}$. Also since $|Y_1|=5$, one can assume that $|N_G(x_i)\cap Y_1 |= 1$ and $N_G(x_i)\cap Y_1\neq N_G(x_j)\cap Y_1$ for each $i,j\in \{2,3,4,5\}$, otherwise $K_{3,3}\subset \overline{G}$. Therefore,  w.l.g suppose that $Y_2=\{y_1, y_6,y_7,y_8, y_9\}$ and $x_3y_2, x_4y_3, x_5y_4\in E(G)$. With symmetry we have $|N_G(x_i)\cap Y_2\setminus\{y_1\} |= 1$ and $N_G(x_i)\cap (Y_2\setminus\{y_1\}) \neq N_G(x_j)\cap (Y_2\setminus\{y_1\}) $ for each $i,j\in \{3,4,5\}$. Now, w.l.g we may suppose that $x_3y_6, x_4y_7, x_5y_8\in E(G)$. Therefore one can say that $K_{3,3}\subseteq\overline{G}[\{x_3, x_4, x_5\}, \{y_1, y_5, y_9\}]$.  Hence, $BR_m(K_{2,2}, K_{3,3})\leq 12$ for $m=5,6$.  
		
		To show that $BR_m(K_{2,2}, K_{3,3})\geq 12$, decompose the edges of $K_{6,11}$ into graphs $G$ and $\overline{G}$, where $G$ is shown in Figure \ref{fi2}. By Figure \ref{fi2} it can be checked that,  $K_{6,11} \rightarrow (K_{2,2},K_{3,3})$, which means that $BR_m(K_{2,2}, K_{3,3})= 12$.
		
		\begin{figure}[ht]
			\begin{tabular}{ccc}
				\begin{tikzpicture}
					\node [draw, circle, fill=white, inner sep=2pt, label=below:$y_1$] (y1) at (0,0) {};
					\node [draw, circle, fill=white, inner sep=2pt, label=below:$y_2$] (y2) at (1,0) {};
					\node [draw, circle, fill=white, inner sep=2pt, label=below:$y_3$] (y3) at (2,0) {};
					\node [draw, circle, fill=white, inner sep=2pt, label=below:$y_4$] (y4) at (3,0) {};
					\node [draw, circle, fill=white, inner sep=2pt, label=below:$y_5$] (y5) at (4,0) {};
					\node [draw, circle, fill=white, inner sep=2pt, label=below:$y_6$] (y6) at (5,0) {};
					\node [draw, circle, fill=white, inner sep=2pt, label=below:$y_7$] (y7) at (6,0) {};
					\node [draw, circle, fill=white, inner sep=2pt, label=below:$y_8$] (y8) at (7,0) {};
					\node [draw, circle, fill=white, inner sep=2pt, label=below:$y_9$] (y9) at (8,0) {};
					\node [draw, circle, fill=white, inner sep=2pt, label=below:$y_{10}$] (y10) at (9,0) {};
					\node [draw, circle, fill=white, inner sep=2pt, label=below:$y_{11}$] (y11) at (10,0) {};

					\
					\node [draw, circle, fill=white, inner sep=2pt, label=above:$x_1$] (x1) at (3,2) {};
					\node [draw, circle, fill=white, inner sep=2pt, label=above:$x_2$] (x2) at (4,2) {};
					\node [draw, circle, fill=white, inner sep=2pt, label=above:$x_3$] (x3) at (5,2) {};
					\node [draw, circle, fill=white, inner sep=2pt, label=above:$x_4$] (x4) at (6,2) {};
					\node [draw, circle, fill=white, inner sep=2pt, label=above:$x_5$] (x5) at (7,2) {};
					\node [draw, circle, fill=white, inner sep=2pt, label=above:$x_6$] (x6) at (8,2) {};
					
					\draw (x1)--(y1)--(x1)--(y2)--(x1)--(y3)--(x1)--(y4);
					\draw (x2)--(y1)--(x2)--(y5)--(x2)--(y6)--(x2)--(y7);
					\draw (x3)--(y1)--(x3)--(y8)--(x3)--(y9)--(x3)--(y10);
					\draw (x4)--(y2)--(x4)--(y5)--(x4)--(y8)--(x4)--(y11);
					\draw (x5)--(y3)--(x5)--(y6)--(x5)--(y9)--(x5)--(y11);
					\draw (x6)--(y4)--(x6)--(y7)--(x6)--(y10)--(x6)--(y11);
				\end{tikzpicture}

			\end{tabular}\\
			\caption{Edge disjoint subgraphs $G$ and $\overline{G}$ of $K_{6,11}$.}
			\label{fi2}
		\end{figure}
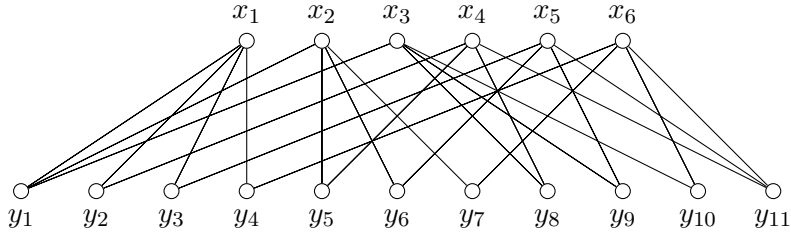
	\end{proof}
	
	\begin{theorem}\label{t3} Suppose that $m\in \{7,8\}$, then  $BR_m(K_{2,2}, K_{3,3})=9$.
	\end{theorem}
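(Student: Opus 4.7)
The lower bound $BR_m(K_{2,2},K_{3,3})\ge 9$ for $m\in\{7,8\}$ follows directly from Theorem~\ref{th1}: a red-blue colouring of $K_{8,8}$ witnessing $BR(K_{2,2},K_{3,3})=9$ restricts to a colouring of $K_{m,8}\subseteq K_{8,8}$ with neither a red $K_{2,2}$ nor a blue $K_{3,3}$. For the upper bound it is enough to prove $K_{7,9}\to(K_{2,2},K_{3,3})$, since any red-blue colouring of $K_{8,9}$ restricts to one of $K_{7,9}$.

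Fix $G\subseteq K_{7,9}$ with $X=\{x_1,\ldots,x_7\}$, $Y=\{y_1,\ldots,y_9\}$, and $K_{2,2}\not\subseteq G$; the target is $K_{3,3}\subseteq\overline{G}$. Applying Lemma~\ref{l1} in both directions (the symmetric version is available because $|X|=7\ge 6$ and $|Y|=9\ge 6$) shows that every vertex of $G$ has degree at most $5$. If $|E(G)|\le 22$, then $|E(\overline{G})|\ge 41>40\ge z((7,9),K_{3,3})$, and Proposition~\ref{pro1} gives $K_{3,3}\subseteq\overline{G}$. So we may assume $|E(G)|\ge 23$; this forces $\Delta:=\Delta(G_X)\in\{4,5\}$.

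If $\Delta=5$, set $Y_1=N_G(x_1)=\{y_1,\ldots,y_5\}$; since $G$ is $K_{2,2}$-free, $|N_G(x_i)\cap Y_1|\le 1$ for every $i\ge 2$, so the six vertices of $X\setminus\{x_1\}$ split among at most six classes (one per element of $Y_1$ plus a ``none'' class for vertices with empty intersection). In each possible distribution of six items among six classes we can select three vertices whose $Y_1$-neighbourhoods union to a set of size at most two, leaving at least three common $\overline{G}$-neighbours in $Y_1$. If $\Delta=4$, set $Y_1=N_G(x_1)=\{y_1,\ldots,y_4\}$ and apply the analogous classification into five classes; the same bookkeeping settles every distribution inside $\overline{G}[X\setminus\{x_1\},Y_1]$ except for the two residual distributions $(2,2,2,0)$ and $(2,2,1,1)$ over the four $Y_1$-classes with empty ``none'' class.

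In these two residual sub-cases the induced $G$-degrees on $Y_1$ are $(3,3,3,1)$ and $(3,3,2,2)$ respectively, so substituting into the Zarankiewicz inequality $\sum_{y\in Y}\binom{d_G(y)}{2}\le\binom{7}{2}=21$ pins down the degree sequence on $Y\setminus Y_1$ to $(3,3,3,2,2)$ and confines $|E(G)|$ to $\{23,24\}$. When $|E(G)|=24$ the inequality is saturated and $G$ is forced, up to isomorphism, to be the bipartite incidence of the Fano plane with one line removed together with the three pair-blocks of a triangle on that removed line; any three of the four Fano points off the removed line are jointly non-adjacent to the three pair-blocks, yielding $K_{3,3}\subseteq\overline{G}$. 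The boundary $|E(G)|=23$ is either a single-edge deletion of this extremal (which only enlarges $\overline{G}$ and preserves the $K_{3,3}$) or is dealt with by a counting argument in the auxiliary ``share graph'' on $X\setminus\{x_1\}$: the identity $\sum_{y\in Y\setminus Y_1}\binom{d_G(y)}{2}=11$ against $12$ cross-group $x$-pairs forces all but one cross-group pair to share a $(Y\setminus Y_1)$-neighbour, and at least three of the resulting transversal triangles in the share graph are not themselves the $G$-neighbourhood of any $y\in Y\setminus Y_1$, producing three common $\overline{G}$-neighbours. The entire difficulty of the theorem is concentrated in this last rigidity step.
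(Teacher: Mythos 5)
Your lower bound and your opening reductions are fine and essentially match the paper: degree at most $5$ via Lemma~\ref{l1}, the edge count $|E(G)|\ge 23$ via Proposition~\ref{pro1}, and the pigeonhole disposal of $\Delta=5$ (six vertices of $X\setminus\{x_1\}$ each meeting a $5$-set in at most one point always contain three whose traces cover at most two points). Your reduction of $\Delta=4$ to the two residual distributions $(2,2,2,0)$ and $(2,2,1,1)$ of $X\setminus\{x_1\}$ over $Y_1$ is also a correct observation. The problem is that everything after that point --- which you yourself identify as ``the entire difficulty of the theorem'' --- is asserted rather than proved, and some of the assertions are wrong. First, the inequality $\sum_{y}\binom{d_G(y)}{2}\le\binom{7}{2}=21$ does not pin the degree sequence on $Y\setminus Y_1$ down to $(3,3,3,2,2)$: with budget $12$ (resp.\ $13$) left for five vertices, the sequences $(3,3,3,3,1)$ and $(4,3,2,2,2)$ (and, in the second residual case, $(3,3,3,3,2)$) also comply, so the structure is not determined. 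Second, the claim that $|E(G)|=24$ forces $G$ to be the Fano-derived configuration ``up to isomorphism'' is a nontrivial uniqueness statement about extremal $C_4$-free subgraphs of $K_{7,9}$ and is given no justification. Third, the dichotomy for $|E(G)|=23$ is not a proof: a $23$-edge $K_{2,2}$-free graph need not be an edge-deletion of a $24$-edge one, and the ``share graph'' paragraph never actually exhibits three vertices of $X$ and three vertices of $Y\setminus Y_1$ (or of $Y$) with no $G$-edges between them --- the step from ``transversal triangles not realized as a neighbourhood'' to ``three common $\overline{G}$-neighbours'' is a non sequitur as written.

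For comparison, the paper avoids extremal-uniqueness entirely. It first kills $\deg_G(x)\ge 5$ outright (since $|X\setminus\{x\}|=6>5$, the same pigeonhole you use for $\Delta=5$ applies), then uses Proposition~\ref{pro1} to force at least two vertices of degree exactly $4$, proves that every $x\in X$ shares exactly one neighbour with every degree-$4$ vertex (its Claim~\ref{c3}), and splits on whether there are at least three or exactly two degree-$4$ vertices; each branch closes with a short explicit configuration chase plus one more application of the $z((7,9),K_{3,3})\le 40$ bound. If you want to salvage your route, you would need to replace the Fano/share-graph paragraph with an honest case analysis of the residual distributions, which in effect reproduces the paper's Claims~\ref{c3}--\ref{c4}.
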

	\begin{proof}
		Suppose that $(X=\{x_1,\ldots,x_7\},Y=\{y_1,y_2,\ldots ,y_{9}\})$ be the partition  sets of  $K_{7,9}$.
		Consider $G\subseteq K_{7,9}$, where $K_{2,2} \nsubseteq G$. If there exists a vertex of $X$ say $x$, such that $\deg_G(x)\geq 5$, then as   $K_{2,2} \nsubseteq G$, we have $|N_G(x_i)\cap N_G(x) |\leq 1$, hence $|X|=7$, by the pigeon-hole principle it can be said that $K_{3,3}\subseteq \overline{G}[X\setminus\{x\}, N_G(x)]$. Also by Proposition \ref{pro1}, since $z((7,9),K_{3,3})\leq 40$, one can say that $|N_G(x)|=4$  for at least two vertices of $X$. Otherwise  $|E(\overline{G})|\geq 41$, so $K_{3,3}\subseteq\overline{G}$. W.l.g let $\deg_G(x_i)=4$ for each $x\in X'$, and let $x_1,x_2\in X'$.  Now, we have the following claim:
		\begin{claim}\label{c3}
			For each  $x\in X$ and each $x'\in X'$, $|N_G(x)\cap N_G(x') |=1$.
		\end{claim}	
		\begin{proof}[Proof of Claim \ref{c3}]
			By contradiction, let $|N_G(x)\cap N_G(x') |=0$ for some $x\in X$ and some $x'\in X'$. If $|N_G(x)\cap N_G(x') |=0$ for at least two vertices of $X$, then it is clear that $K_{3,3}\subseteq \overline{G}[X, Y_1]$. So, w.l.g let $x'=x_1$ and  $N_G(x_1)= Y_1$. Therefore as $|Y_1|=4$ and $|X|=7$, then by the pigeon-hole principle  there exist at least two vertices of $X\setminus\{x_1,x\}$ say $x',x''$ so that $N_G(x')\cap Y_1 =N_G(x'')\cap Y_1$, which means that $K_{3,3}\subseteq \overline{G}[\{x,x', x''\}, Y_1]$. 
		\end{proof}
		Now, by Claim \ref{c3} w.l.g let $N_G(x_1)\cap Y_1= \{y_1,y_2,y_3,y_4\}$ and $N_G(x_2)\cap Y_2= \{y_1,y_5,y_6,y_7\}$. By considering $|X'|$ we have two case as follow:
		
		{\bf Case 1: $|X'|\geq 3$.} W.l.g assume that $x_3\in X'$, therefore Claim \ref{c3} limits us to   $N_G(x_3)\cap Y_3= \{y,y',y_8,y_9\}$, where $y\in \{y_2,y_3, y_4\}$ and $y'\in \{y_5,y_6,y_7\}$. W.l.g assume that $y=y_2, y'=y_5$. So, as $K_{2,2}\nsubseteq G$ we have $|X'|=3$, and $|N_G(x)\cap Y_i|\leq 1$ for each $i=1,2,3$ and each $x\in X\setminus X'$. If there exist at least two vertices of $X\setminus X'$ say $x',x''$ such  that $|N_G(w)\cap \{y_1, y_2, y_5\}|=1$, for each $w\in \{x',x''\}$, then $|N_G(w)|\leq 2$, otherwise  $K_{2,2}\subseteq G$, a contradiction, so as $|X'|=3$ and $|N_G(x')|\leq 2$, we have $|E(G)|\leq 22$, therefore $|E(\overline{G})|\geq 41$, and by Proposition \ref{pro1}, $K_{3,3}\subseteq\overline{G}$. Hence, suppose that $|N_G(x')\cap \{y_1, y_2, y_5\}|=0$ for at least three vertices of $ X\setminus X'$.  Which means that $K_{3,3}\subseteq \overline{G}[X\setminus X', \{y_1, y_2, y_5\}]$. 
		
		{\bf Case 2: $|X'|= 2$.}  By Proposition \ref{pro1}, we have $|N_G(x)|=3$ for each $x\in X\setminus X'$. Now we have the following claim:
		
		\begin{claim}\label{c4}
			If there exist a vertex of $X\setminus X'$ say $x$, so that  $xy_1\in E(G)$, then $K_{3,3}\subseteq \overline{G}$.
		\end{claim}	
		\begin{proof}[Proof of Claim \ref{c4}]
			If $xy_1\in E(G)$  for at least two vertices of $X\setminus X'$, then $K_{3,3}\subseteq \overline{G}[X\setminus \{x_1\}, Y_1\setminus \{y_1\}]$.	So, w.l.g let $x_3y_1\in E(G)$. Since   $|N_G(x_3)|=3$, we have $N_G(x_3)=Y_3=\{y_1,y_8,y_9\}$. Now  consider $X''=\{x_4,x_5,x_6,x_7\}$. As $|N_G(x)|=3$, for each $x\in X''$, we have $|N_G(x)\cap Y_i\setminus \{y_1\}|=1$. Also as $|X''|=4$, and  $|Y_i\setminus \{y_1\}|=3$ for $i=1,2$, then  by the pigeon-hole principle  there are at least two vertices of $X''$, say $x_4, x_5$, such that $N_G(x_4)\cap Y_1\setminus \{y_1\}=N_G(x_5)\cap Y_1\setminus \{y_1\}=\{y\}$. W.l.g assume that $y=y_2$. Also by the pigeon-hole principle one can say that there is at least one vertex of  $Y_2\setminus \{y_1\}$ say $y'$, such that $y'x_4, y'x_5\in E(\overline{G})$. W.l.g let $y'=y_5$. Hence, it can be checked that $K_{3,3}\subseteq \overline{G}[\{x_3,x_4, x_5\}, \{y_3,y_4,y_5\}]$. 
		\end{proof}
		Now, by Claim \ref{c4} we may assume that 	$xy_1\in E(\overline{G})$  for each $x\in X\setminus X'$. 	By Claim \ref{c3},	for each  $x\in X\setminus \{x_1,x_2\}=X''$ and each $x'\in \{x_1,x_2\}$, we have $|N_G(x)\cap N_G(x') |=1$.  Now, since $|X\setminus X'|=5$, by the pigeon-hole principle  there exists  two vertices of $\{y_2,y_3,y_4\}$ say $y_2, y_3$,  such that $|N_G(y_i)\cap X'''|=2$, where $i=2,3$. W.l.g let $N_G(y_2)\cap X'''=\{x_3,x_4\}$ and $N_G(y_3)\cap X'''=\{x_5,x_6\}$. Since $K_{2,2}\nsubseteq G$, $xy_1\in E(\overline{G})$ and  $|N_G(x)\cap N_G(x_2) |=1$, we may assume that $x_3y_5, x_4y_6\in E(G)$. Also for at least one $i\in \{5,6\}$ w have $x_iy_7\in E(\overline{G})$, otherwise it can be said that $K_{2,2}\subseteq G$, a contradiction. Hence assume that $x_5y_7\in E(\overline{G})$, Therefore one can check that $K_{3,3}\subseteq \overline{G}[\{x_3,x_4, x_5\}, \{y_1,y_4,y_7\}]$. Which means that in any case $K_{3,3}\subseteq \overline{G}$.
		
		Hence by Cases 1,2, we have  $BR_7(K_{2,2}, K_{3,3})\leq 9$.  This also implies that every red-blue coloring of $K_{8,9}$ results in a red $K_{2,2}$ or a blue $K_{3,3}$. Therefore by Theorem \ref{th1} as $K_{8,8}\rightarrow (K_{2,2}, K_{3,3})$ we have $BR_m(K_{2,2}, K_{3,3})= 9$ where $m=7,8$. Hence the proof is complete.
	\end{proof}
\begin{proof}[\bf Proof of Theorem \ref{M.th}] For $m=2,3$, it is easy to say that $BR_m(K_{2,2}, K_{3,3})$ does not exist. Now,
	by combining Theorem \ref{t1}, \ref{t2} and \ref{t3}  we conclude that the proof of Theorem \ref{M.th} is complete.
\end{proof}
	\bibliographystyle{plain}
	\bibliography{BI}
	
\end{document}